\newif\ifDRAFT
\definecolor{keywordcolor}{rgb}{0.7, 0.1, 0.1}   
\definecolor{commentcolor}{rgb}{0.4, 0.4, 0.4}   
\definecolor{symbolcolor}{rgb}{0.0, 0.1, 0.6}    
\definecolor{sortcolor}{rgb}{0.1, 0.5, 0.1}      
\definecolor{errorcolor}{rgb}{1, 0, 0}           
\definecolor{stringcolor}{rgb}{0.5, 0.3, 0.2}    
\newcommand{\mathlib}{\texttt{mathlib}}
\newcommand{\lean}{\lstinline}
\numberwithin{equation}{section}
\theoremstyle{plain}
\newtheorem{theorem}{Theorem}[subsection]
\newtheorem{lemma}[theorem]{Lemma}
\theoremstyle{definition}
\newtheorem{definition}[theorem]{Definition}
\theoremstyle{remark}
\newtheorem{remark}[theorem]{Remark}
\newcommand\blfootnote[1]{%
  \begingroup
  \renewcommand\thefootnote{}\footnote{#1}%
  \addtocounter{footnote}{-1}%
  \endgroup
}
\title{Formalizing Hall's Marriage Theorem in Lean}
\author{Alena Gusakov\footnote{Department of Computer and Information Sciences, University of Delaware, Newark, Delaware 19716, USA}, Bhavik Mehta\footnote{Department of Pure Mathematics and Mathematical Sciences, University of Cambridge, UK}, Kyle A. Miller\footnote{Department of Mathematics, University of California, Berkeley, California 94720-3840, USA}}
\date{December 2020}
\begin{document}

\maketitle

\blfootnote{Contact: \texttt{agusakov@udel.edu}, \texttt{bm489@cam.ac.uk}, and \texttt{kmill@math.berkeley.edu}}

%

\begin{abstract}
    We formalize Hall's Marriage Theorem in the Lean theorem prover for inclusion in \mathlib{}, which is a community-driven effort to build a unified mathematics library for Lean.
    One goal of the \mathlib{} project is to contain all of the topics of a complete undergraduate mathematics education.
    
    We provide three presentations of the main theorem statement: in terms of indexed families of finite sets, of relations on types, and of matchings in bipartite graphs.
    We also formalize a version of K\H{o}nig's lemma (in terms of inverse limits) to boost the theorem to the case of countably infinite index sets.
    We give a description of the design of the recent  \mathlib{} library for simple graphs, and we also give a necessary and sufficient condition for a simple graph to carry a function.
    
    
    \vspace{1em}
    
    \noindent\emph{Keywords:} Hall's Marriage Theorem, computer formalization, graph theory.
    
    \vspace{0.4em}
    
    \noindent Mathematics Subject Classification 2020: 05-04, 05C70; 68R05
\end{abstract}

\section{Introduction}

Mathematics communities surrounding proof assistants have long been interested in their applications to research\cite{Buzzard2019}\cite{Hales2017}. However, most existing formalizations of results such as these tend to be self-contained and cannot be found in any sort of larger general-purpose libraries\cite{Singh2017} or only contain a partial formalization of their main result\cite{Strickland2019}. For this reason, mathematics communities surrounding proof assistants are currently working to formalize the standard undergraduate-level mathematics curriculum, with the goal of building up to graduate- and research-level results.

Combinatorics and graph theory, part of the standard undergraduate discrete mathematics curriculum, are somewhat hard to find within existing proof assistant libraries.
An early formalization of graphs was \cite{Chou1994}, which in HOL defined multigraphs and proved basic facts about paths, reachability, and trees.
Planar graph theory formalization came in 2004, when Georges Gonthier and Benjamin Werner formalized the proof of the Four Color Theorem by Neil Robertson et al. in Coq \cite{Gonth2008}, followed by formalizations of theorems including Dilworth's Theorem in 2017 by Abishek Singh\cite{Singh2017}, and the weak perfect graph theorem by A. K. Singh and R. Natarajan in 2019\cite{Singh2019}. In 2020, Christian Doczkal and Damien Pous formalized Menger's Theorem as well as some results involving minors and treewidths of graphs, and proposed a general-purpose graph theory library for Coq \cite{Doczkal2019}. However, none of these results fall under any sort of graph theory library that is integrated with other areas of math within the respective proof assistant. 

By contrast, the community surrounding the Lean proof assistant is working to create a unified library of mathematical results across all fields called \mathlib{}\cite{mathlib2019}. First introduced in 2013\cite{Moura2018}, Lean has not been around for as long as other more well-established proof assistants. However, there are several technical and logistical features that make Lean a good tool for creating such a library. Its implementation using dependent type theory, based on the Calculus of Inductive Constructions (CIC), makes automation and verification of Lean easier than proof assistants based on ZFC, and it makes things like algebraic reasoning easier in Lean than proof assistants based on simple type theory. The most comparable proof assistant to Lean is Coq, which has its own mathematics library called the Mathematical Components library. Many community members and contributors surrounding both Lean and Coq have mathematics backgrounds.  There are subtle differences in the philosophies held by contributors: libraries using Coq tend to be self-contained without any dependence on substructures, and they also tend to be exclusively constructive, whereas mathematical objects in \mathlib{} inherit properties of underlying structures, and classical logic is used where needed \cite{mathlib2019}.
 
Until Summer of 2020, \mathlib{} did not have many combinatorial or graph theoretical concepts, but since then there has been a small but steadily growing library for these topics to which the authors of this paper have contributed. One notable undergraduate-level theorem missing from the graph theory and combinatorics libraries is Hall's Marriage Theorem. Part of the standard undergraduate discrete mathematics curriculum, Hall's Marriage Theorem has multiple equivalent formulations within combinatorics, from matchings in graphs to distinct representatives of sets. 

In this paper, we document the formalization of Hall's Marriage Theorem in Lean. This is not the first time it has been formalized in a proof assistant: in \cite{Singh2017}, the proof of Dilworth's Theorem is specialized to a proof of Hall's Marriage Theorem in Coq. However, the proof is limited to the context of sets and sequences. We propose formulations of three different versions of Hall's Marriage Theorem: via indexed sets, relations on types, and matchings in bipartite graphs.
The formalized proof of these are in the process of being incorporated into \mathlib{}.\footnote{The code resides in \url{https://github.com/leanprover-community/mathlib/tree/simple_graph_matching} before it is merged into \mathlib{} proper.}

The organization of this paper is as follows.
\Cref{sec:background} recalls Hall's Marriage Theorem and formulates it in three different ways, where \Cref{sec:hall-proof} gives the specific version of the proof that we formalize.
\Cref{sec:formalization} gives formalizations of each of these formulations, and for the relation-based formulation we give an overview of its Lean proof in \Cref{sec:hall-theorem-formal}.
Included in this section is \Cref{sec:konig-lemma-formal}, which has a description of a formulation of K\H{o}nig's lemma along with its application to proving a formalization of Hall's Marriage Theorem with a countably infinite index set.
We also give a graph-theoretic application in \Cref{sec:application}, giving a necessary and sufficient condition for a simple graph to carry a function.
We briefly discuss future work in \Cref{sec:future}.

\section{Background}
\label{sec:background}

For a set $S$, consider a family of finite subsets $X_1,\dots,X_n\subseteq S$.
In \cite{Hall1935}, Philip Hall gave a necessary and sufficient condition for there to exist a set of distinct representatives $x_1,\dots,x_n\in S$ with $x_i\in X_i$ for each $1\leq i \leq n$. The condition is that
\begin{align}
    \label{eq:marriage-condition}
    \text{for every subset $J\subseteq \{1,\dots,n\}$ that }\lvert J\rvert \leq \left\lvert \bigcup_{i\in J} X_i\right\rvert.
\end{align}
This result is known as \emph{Hall's Marriage Theorem} due to the story with which it is usually presented.
Instead, we will give an alternative story that we believe is more intuitive.
A group of people go to a dog shelter, and they each have a (finite) list of dogs they would prefer to adopt.
The question is what are the precise conditions under which everyone will be matched with a dog they prefer.
From this point of view, \emph{Hall's marriage condition} (\Cref{eq:marriage-condition}) is that, for every subset of $k$ people, the length of their combined list of preferred dogs is at least $k$.

In the following sections, we give three formulations of Hall's Theorem in terms of indexed families of sets, of relations on sets, and of matchings in bipartite graphs.
Each of these formulations will be given Lean formalizations in \Cref{sec:formalization}.


\subsection{Via indexed families of sets}
\label{sec:hall-via-indexed}

We will restate Hall's Marriage Theorem in terms of indexed families of finite subsets.
\begin{definition}
    For a fixed set $S$, a \emph{family of finite subsets $\{X_i\}_{i\in I}$ indexed by a set $I$} is a collection of subsets $X_i\subseteq S$ for each $i\in I$.
    The set $I$ is called the \emph{index set}.
    An element $x\in \prod_{i\in I}X_i$ is called a \emph{family of elements} of the indexed family, and it may be regarded as a function $I\to S$ with $x_i:=x(i)$ with $x_i\in X_i$ for each $i\in I$.
\end{definition}

\begin{definition}
    A \emph{matching} (or \emph{transversal}) of an indexed family of subsets $\{x_i\}_{i\in I}$ is a family of elements $x$ that is injective when thought of as a function $I\to S$, which is to say that $x_i=x_j$ implies $i=j$.
\end{definition}

\begin{theorem}[{Hall's Marriage Theorem\cite{Hall1935}}]
    \label{thm:hall-indexed-family}
    Let $\{X_i\}_{i\in I}$ be an indexed family of finite subsets with finite index set $I$.
    The indexed family has a matching if and only if for all $J\subseteq I$, we have $\lvert J\rvert \leq \left\lvert \bigcup_{i\in J} X_i\right\rvert$.
\end{theorem}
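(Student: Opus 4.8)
The plan is to prove the two directions separately: necessity directly, and sufficiency by strong induction on $|I|$. For necessity, suppose $x$ is a matching and fix any $J\subseteq I$. The restriction $x|_J$ is injective and takes values in $\bigcup_{i\in J}X_i$, so this union contains at least $|J|$ distinct elements, giving $|J|\le\lvert\bigcup_{i\in J}X_i\rvert$; this direction is pure bookkeeping about injective functions and needs no induction. For sufficiency, I would assume Hall's condition holds and induct on $|I|$, taking the empty (or singleton) index set as the trivial base case. The inductive step splits according to whether some proper part of the family is \emph{critical}, i.e.\ attains equality in Hall's condition.

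\emph{First case:} for every nonempty proper subset $J\subsetneq I$ the inequality is strict, $|J|<\lvert\bigcup_{i\in J}X_i\rvert$. Pick any $i_0\in I$; Hall's condition applied to $\{i_0\}$ forces $X_{i_0}$ to be nonempty, so choose some $x_{i_0}\in X_{i_0}$. Delete $i_0$ from the index set and delete $x_{i_0}$ from every remaining $X_i$. Since each relevant union shrinks by at most one element while every proper subset previously had strict slack, the reduced family on $I\setminus\{i_0\}$ still satisfies Hall's condition. The induction hypothesis yields a matching of the reduced family, and adjoining the pair $(i_0,x_{i_0})$ extends it to a matching of the original family; injectivity is preserved precisely because $x_{i_0}$ was removed from the images of the remaining indices.

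\emph{Second case:} some nonempty proper subset $J_0\subsetneq I$ is critical, $|J_0|=\lvert\bigcup_{i\in J_0}X_i\rvert$. Write $Y_0=\bigcup_{i\in J_0}X_i$. Applying the induction hypothesis to $\{X_i\}_{i\in J_0}$ gives a matching on $J_0$, which is a bijection onto $Y_0$ because $|J_0|=|Y_0|$. For the complementary indices I would consider the family $\{X_i\setminus Y_0\}_{i\in I\setminus J_0}$. For any $J\subseteq I\setminus J_0$ the identity $\lvert\bigcup_{i\in J}(X_i\setminus Y_0)\rvert=\lvert\bigcup_{i\in J\cup J_0}X_i\rvert-\lvert Y_0\rvert$ together with Hall's condition on $J\cup J_0$ and $|Y_0|=|J_0|$ yields $\lvert\bigcup_{i\in J}(X_i\setminus Y_0)\rvert\ge|J|$, so the complementary family also satisfies Hall's condition. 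The induction hypothesis then produces a matching avoiding $Y_0$, and since the two sub-matchings have disjoint images (one lands in $Y_0$, the other avoids it) their union is a matching of the whole family.

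I expect the main obstacle to be the second case. The delicate points are verifying that the complementary family inherits Hall's condition, which rests on the cardinality identity above and on the inclusion $Y_0\subseteq\bigcup_{i\in J\cup J_0}X_i$, and then checking that gluing the two sub-matchings preserves global injectivity. In the formal setting I would anticipate that the genuine effort lies not in the mathematical idea but in the supporting infrastructure: setting up strong induction on $|I|$, managing finite-set cardinality arithmetic for unions and set differences, and making the dichotomy between the strict and critical cases effective, including obtaining the witnessing critical subset $J_0$.
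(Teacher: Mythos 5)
Your proposal is correct and follows essentially the same route as the paper's proof (given in \Cref{sec:hall-proof} for the equivalent relation formulation): necessity by injectivity bookkeeping, then strong induction on the index set with the same dichotomy between the all-strict case (remove one index and one chosen element) and the critical-subset case (match the critical part, verify Hall's condition for the complementary family by the same cardinality arithmetic, and glue). The only cosmetic difference is that the paper phrases the argument in terms of a relation $r$ between finite sets rather than an indexed family, which changes nothing mathematically.
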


We will give a proof of a variant of this theorem in \Cref{sec:hall-proof}.
Hall's Marriage Theorem was extended to infinite families by Marshall Hall:
\begin{theorem}[{Infinite Hall's Marriage Theorem\cite{Hall1986}}]
    \label{thm:infinite-hall-indexed-family}
    Let $\{X_i\}_{i\in I}$ be an indexed family of finite subsets with an index set $I$ of any cardinality.
    The indexed family has a matching if and only if for all finite subsets $J\subseteq I$ then $\lvert J\rvert \leq \left\lvert \bigcup_{i\in J} X_i\right\rvert$.
\end{theorem}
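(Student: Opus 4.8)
The plan is to reduce to the finite case (\Cref{thm:hall-indexed-family}) via a compactness argument, following the route through K\H{o}nig's lemma that the paper advertises. The forward direction is immediate and needs no finiteness of $I$: given a matching $x\colon I\to S$, for any finite $J\subseteq I$ the restriction $x|_J$ is injective with image contained in $\bigcup_{i\in J}X_i$, so $\lvert J\rvert = \lvert x(J)\rvert \leq \bigl\lvert\bigcup_{i\in J}X_i\bigr\rvert$. The content is therefore entirely in the reverse direction: constructing a global matching from the hypothesis that every finite subfamily satisfies Hall's condition.

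First I would record two preliminary facts. Each $X_i$ is nonempty, since applying the hypothesis to $J=\{i\}$ gives $1\leq\lvert X_i\rvert$. And for every finite $J\subseteq I$ the subfamily $\{X_i\}_{i\in J}$ itself satisfies Hall's condition, because every subset of $J$ is finite; hence by the finite theorem it admits a matching $m_J\colon J\to S$. The task is to stitch these local matchings $m_J$ into a single injective $m\colon I\to S$, and the obstruction is that the $m_J$ need not be mutually compatible.

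For countably infinite $I$, I would enumerate $I=\{i_0,i_1,\dots\}$ and set $A_n$ to be the set of matchings of the subfamily indexed by $\{i_0,\dots,i_{n-1}\}$, which is finite because the $X_{i_k}$ are finite. The restriction maps $A_{n+1}\to A_n$ form an inverse system, and each $A_n$ is nonempty by the finite theorem, so K\H{o}nig's lemma in the inverse-limit form the paper formalizes yields a point $(m_n)_n$ of $\varprojlim A_n$: a compatible family of finite matchings. These assemble into a function $m\colon I\to S$ with $m(i_k)=m_{k+1}(i_k)$, which is well defined by compatibility; injectivity follows because any two indices lie in a common initial segment on which the relevant $m_n$ is already injective.

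The main obstacle is exactly this compactness step: passing from the local matchings to a global one is where the inverse-limit machinery is essential and where the proof genuinely leaves the finite world, whereas everything else is bookkeeping on top of \Cref{thm:hall-indexed-family}. For an index set of arbitrary cardinality the same idea goes through with Tychonoff's theorem in place of K\H{o}nig's lemma: view $\prod_{i\in I}X_i$ as a compact product of finite discrete spaces and intersect the closed sets $\{x : x|_J \text{ is injective}\}$ over all finite $J$, which enjoy the finite-intersection property because $m_{J_1\cup J_2}$ witnesses $M_{J_1}\cap M_{J_2}\neq\emptyset$ by the finite theorem; any point of the total intersection is a global matching. The formalization described here restricts to the countable case covered by K\H{o}nig's lemma.
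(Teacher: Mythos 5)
Your proposal is correct and takes essentially the same approach as the paper: reduce to the finite theorem on finite subfamilies, then stitch the resulting local matchings together by compactness --- K\H{o}nig's lemma applied to the inverse system of matching sets on initial segments in the countable case (exactly the paper's formalized route), and Tychonoff plus the finite intersection property for arbitrary $I$. The only cosmetic difference is that your general-cardinality argument intersects the clopen sets $\{x : x|_J \text{ injective}\}$ inside $\prod_{i\in I}X_i$, whereas the paper realizes $\varprojlim_J M_J$ as an intersection of closed sets inside $\prod_J M_J$; these are the same compactness idea in slightly different clothing.
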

\begin{proof}[Proof sketch]
    The hard direction, that the marriage condition implies the existence of a matching, follows from the Compactness Theorem from logic, but there is also a topological proof in \cite{Halpern1966}.
    For each finite subset $J\subseteq I$, let $M_J$ denote the set of all matchings on the indexed family $\{X_i\}_{i\in J}$.
    This indexed family satisfies the marriage condition, hence $M_J$ is nonempty for each $J$ by \Cref{thm:hall-indexed-family}.
    If $J\subseteq J'$, then there is a map $M_J'\to M_J$ given by restriction of the matching, and these maps define an inverse system.
    Since each $M_J$ is finite, and thus compact, the inverse limit $\varprojlim_{J}M_J$ is nonempty since it is an intersection of closed subsets with the finite intersection property of $\prod_J M_J$, which is compact by the Tychonoff theorem.
    Any element of $\varprojlim_JM_J$ gives a matching for the original indexed family.
\end{proof}

In the case that $I$ is countably infinite, then the above argument follows from a variant of  K\H{o}nig's lemma, which we will describe in more detail in \Cref{sec:konig-lemma-formal}.

\subsection{Via relations}
\label{sec:hall-via-relations}

For sets $A$ and $B$, consider a relation $r$ between $A$ and $B$, with $r\ a\ b$ indicating that $a\in A$ is related to $b\in B$ by $r$.
For a subset $S\subseteq A$, let $r(S)$ denote the set $\{b\in B\mid \exists a\in A, r\ a\ b\}$.

\begin{definition}
    Given a relation $r$ between sets $A$ and $B$, a \emph{matching of $r$ that saturates a subset $S\subseteq A$} is an injective function $f:S\to B$ that \emph{respects} the relation $r$, which is to say that $r\ a\ f(a)$ for all $a\in S$.
    A matching that saturates $A$ is simply called a matching.
\end{definition}

\begin{restatable}[Hall's Marriage Theorem]{theorem}{thmHallRelations}
    \label{thm:hall-relation}
    Let $r$ be a relation between a finite set $A$ and a finite set $B$.
    The relation has a matching that saturates $A$ if and only if for all $S\subseteq A$ then $\lvert S\rvert \leq \lvert r(S)\rvert$.
\end{restatable}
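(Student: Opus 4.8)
The plan is to establish the easy direction by a direct argument and the hard direction by strong induction on $\lvert A\rvert$. For the forward implication, suppose $f\colon A\to B$ is a matching that saturates $A$. Given any $S\subseteq A$, the restriction $f|_S$ takes values in $r(S)$, since $r\ a\ f(a)$ holds for every $a\in S$; as $f$ is injective, so is $f|_S$, and therefore $\lvert S\rvert\leq\lvert r(S)\rvert$. This direction uses neither a case analysis nor the finiteness of $B$.

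For the converse I would induct on $n=\lvert A\rvert$. The base case $\lvert A\rvert\leq 1$ is immediate: the empty function saturates $\emptyset$, and if $A=\{a\}$ then the hypothesis $\lvert\{a\}\rvert\leq\lvert r(\{a\})\rvert$ supplies some $b$ with $r\ a\ b$, so $a\mapsto b$ works. For the inductive step I would split according to whether the marriage condition is slack or tight on proper subsets. In Case 1, assume $\lvert S\rvert<\lvert r(S)\rvert$ for every nonempty $S\subsetneq A$. Fix any $a\in A$; since $\lvert r(\{a\})\rvert\geq 1$ there is $b$ with $r\ a\ b$. Match $a$ to $b$ and pass to the relation $r'$ obtained by deleting $a$ from $A$ and $b$ from $B$. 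For every $S\subseteq A\setminus\{a\}$ we have $\lvert r'(S)\rvert\geq\lvert r(S)\rvert-1\geq\lvert S\rvert$, where the last inequality uses the strict surplus, so the marriage condition survives and the inductive hypothesis yields a matching of $A\setminus\{a\}$; together with $a\mapsto b$ this saturates $A$.

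In Case 2, assume instead that some nonempty $A_0\subsetneq A$ is tight, meaning $\lvert A_0\rvert=\lvert r(A_0)\rvert$. Restricting $r$ to $A_0$ with target $r(A_0)$ inherits the marriage condition, so induction produces a matching $f_0\colon A_0\to r(A_0)$, which the equality of cardinalities forces to be a bijection onto $r(A_0)$. For the complement, restrict $r$ to $A\setminus A_0$ with target $B\setminus r(A_0)$, calling the result $r''$. For $S\subseteq A\setminus A_0$ one has $r''(S)=r(S)\setminus r(A_0)$, and combining $\lvert r(S\cup A_0)\rvert\geq\lvert S\rvert+\lvert A_0\rvert$ with $r(S\cup A_0)=r(S)\cup r(A_0)$ gives $\lvert r''(S)\rvert\geq\lvert S\rvert$. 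Induction then supplies a matching $f_1$ of $A\setminus A_0$ into $B\setminus r(A_0)$, and gluing $f_0$ and $f_1$ produces a matching of $A$ that is injective because its two pieces have images in the disjoint sets $r(A_0)$ and $B\setminus r(A_0)$.

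I expect the main obstacle to be the bookkeeping around the restricted relations and the verification that the marriage condition is preserved under each reduction, particularly the cardinality manipulation in Case 2 that turns a tight block into a clean decomposition of both the domain and the codomain. Each individual step is elementary, but in a formalization the handling of set differences, restrictions of $r$, and the disjointness needed for injectivity of the glued function are precisely where the argument becomes technical, so I would isolate these as separate lemmas before assembling the inductive step.
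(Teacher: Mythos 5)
Your proposal is correct and follows essentially the same route as the paper's proof: the easy direction via restriction and injectivity, and the hard direction by strong induction on $\lvert A\rvert$ with the same dichotomy between a strict surplus on all proper nonempty subsets (delete a matched pair $a,b$ and apply induction) and the existence of a tight set $A_0$ (split into $A_0$ with target $r(A_0)$ and its complement with target $B\setminus r(A_0)$, then glue). The only cosmetic differences are that you merge the two base cases and note that $f_0$ is a bijection onto $r(A_0)$, which the paper does not need to observe.
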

\begin{proof}
    Consider $A$ to be the index set for the family $\{X_a\}_{a\in A}$ with $X_a:=\{b\in B\mid r\ a\ b\}$.
    Then this theorem follows from \Cref{thm:hall-indexed-family}.
\end{proof}

This theorem is equivalent to \Cref{thm:hall-indexed-family}, since $S$ in that theorem can be restricted to $\bigcup_{i\in I}X_i$, which is a finite set.
One can also restate \Cref{thm:hall-relation} so that, instead of requiring that $B$ be finite,  each $a\in A$ is related to only finitely many elements of $B$.

\subsection{Via graph theory}
\label{sec:hall-via-graphs}

A \emph{(simple) graph} $G$ on a set $V$ of \emph{vertices} is a symmetric irreflexive binary relation on $V$, where vertices $v,w\in V$ are \emph{adjacent} if they are related by this relation.
An \emph{edge} of $G$ is an unordered pair of adjacent vertices, and the set of all edges of $G$ is denoted $E(G)$; the vertices comprising an edge are said to be \emph{incident} to it.
For subsets $S\subseteq V$ of vertices, the \emph{neighborhood} $\Gamma(S)$ of $S$ is the set of all vertices in $V$ adjacent to at least one vertex in $S$.

\begin{definition}
    A \emph{matching} $M$ on a graph $G$ is a subset $M\subseteq E(G)$ of edges such that distinct edges of $M$ share no incident vertices.
    The matching is said to \emph{saturate} a subset $W\subseteq V$ if every vertex of $W$ is incident to an edge of $M$.
\end{definition}

\begin{definition}
    A \emph{(proper) coloring} of a graph $G$ with color set $C$ is a function $f:V\to C$ assigning colors to each vertex such that adjacent vertices have different colors.
    For color $c\in C$, the \emph{color class} associated to $c$ is $f^{-1}(c)$.
\end{definition}

\begin{definition}
    A \emph{bipartition} of a graph $G$ is a coloring of $G$ with color set $\{1,2\}$.
    Let $V_1$ and $V_2$ respectively denote the color classes for colors $1$ and $2$.
    If a bipartition exists, the graph is called \emph{bipartite}.
\end{definition}

\begin{theorem}[Hall's Marriage Theorem]
    \label{thm:hall-graph}
    Let $G$ be a bipartitioned simple graph with $V_1$ finite and $\Gamma(v)$ finite for each $v\in V_1$.
    $G$ has a matching that saturates $V_1$ if and only if for all $S\subseteq V_1$ then $\lvert S\rvert \leq \lvert \Gamma(S)\rvert$.
\end{theorem}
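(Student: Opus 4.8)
The plan is to reduce the statement to the relation-based formulation in \Cref{thm:hall-relation}. First I would set $A := V_1$ and $B := V_2$ and define the relation $r$ between them by declaring $r\ a\ b$ to hold exactly when $a$ and $b$ are adjacent in $G$. The key structural observation is that, because $\{V_1, V_2\}$ is a bipartition (a proper coloring with color set $\{1,2\}$), adjacent vertices receive distinct colors; hence $V_1 \cap V_2 = \emptyset$ and every neighbor of a vertex in $V_1$ lies in $V_2$. Consequently, for every $S \subseteq V_1$ we have $r(S) = \Gamma(S)$, so Hall's condition $\lvert S\rvert \le \lvert \Gamma(S)\rvert$ for all $S \subseteq V_1$ is literally the relation marriage condition $\lvert S\rvert \le \lvert r(S)\rvert$. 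The finiteness hypotheses transfer directly: $V_1$ finite makes $A$ finite, and $\Gamma(V_1) = \bigcup_{v \in V_1}\Gamma(v)$ is a finite union of finite sets and hence finite, so I may take $B := \Gamma(V_1) \subseteq V_2$, putting me squarely in the finite setting of \Cref{thm:hall-relation} (alternatively one invokes the variant noted at the end of \Cref{sec:hall-via-relations} in which only each $\Gamma(v)$ need be finite).

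The remaining work is to translate between the two notions of \emph{matching}, and this is where I expect the only real care to be needed. Given a matching $M$ of $G$ saturating $V_1$, I would define $f \colon V_1 \to V_2$ by sending each $a \in V_1$ to the opposite endpoint of the unique edge of $M$ incident to $a$. This is well-defined because distinct edges of a matching share no vertices, so at most one edge is incident to $a$, while saturation guarantees at least one; the opposite endpoint lies in $V_2$ by the coloring argument above. The map $f$ respects $r$ by construction, and it is injective: if $f(a_1) = f(a_2) = b$, then the edges $\{a_1, b\}$ and $\{a_2, b\}$ both lie in $M$ and share the vertex $b$, forcing them to be equal and hence $a_1 = a_2$.

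Conversely, from an injective $r$-respecting map $f \colon V_1 \to V_2$ I would build $M := \{\{a, f(a)\} : a \in V_1\}$. Each pair is a genuine edge since $a$ and $f(a)$ are adjacent. To see that $M$ is a matching, suppose two of its edges share a vertex $v$ with $a_1 \ne a_2$; since $a_i \in V_1$, $f(a_i) \in V_2$, and these classes are disjoint, the shared vertex cannot belong to both classes, so either it is the common $V_1$-endpoint (giving $a_1 = a_2$) or the common $V_2$-endpoint (giving $f(a_1) = f(a_2)$, hence $a_1 = a_2$ by injectivity); either way we reach a contradiction. By construction $M$ saturates $V_1$. Having established this correspondence between graph matchings saturating $V_1$ and relation matchings saturating $A$, the theorem follows from \Cref{thm:hall-relation}. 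The main obstacle is thus not a deep argument but the bookkeeping of this translation, together with the careful use of the bipartite structure to ensure $V_1 \cap V_2 = \emptyset$ and that neighbors remain in the opposite class—facts that in a formalization must be extracted explicitly from the definition of a proper coloring.
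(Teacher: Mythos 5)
Your proposal is correct and follows exactly the paper's approach: the paper's proof is the single remark that a bipartitioned simple graph ``is precisely'' a relation between $V_1$ and $V_2$, so that \Cref{thm:hall-relation} applies. You have simply made explicit the details the paper leaves tacit --- the disjointness of the color classes, the two-way translation between graph matchings saturating $V_1$ and injective relation-respecting functions, and the finiteness bookkeeping via $B := \Gamma(V_1)$ --- all of which are handled correctly.
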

\begin{proof}
    A bipartioned simple graph is precisely a relation between $V_1$ and $V_2$, hence \Cref{thm:hall-relation} is equivalent.
\end{proof}

\subsection{Proof of Hall's Marriage Theorem}
\label{sec:hall-proof}


For reference, we give a proof of Hall's theorem, mirroring the treatment in \cite{Rosen2011}.
We give it in terms of relations as in \Cref{thm:hall-relation}, which we recall:
\thmHallRelations*
\begin{proof}
    First suppose that there exists a matching $M$ that saturates $A$.
    If $S \subseteq A$, then since $M$ saturates $A$ it must also saturate $S$. 
    If $M(S)$ 
    denotes the image of $S$ by $M$ in $B$, then $|S| = |M(S)|$ by injectivity. 
    Since $M(S) \subseteq r(S)$, we have that $|S| = |M(S)| \leq |r(S)|$.
    
    The converse is the ``hard'' direction.
    We proceed by strong induction on $n=\lvert A\rvert$.
    \begin{description}
    \item[Base case ($n=0$):] This means that $A = \emptyset$. The empty matching saturates $\emptyset$.
    
    \item[Base case ($n=1$):] This means that $A = \{a\}$ for some $a$, hence every $S \subseteq A$ is either the empty set or $\{a\}$. Since we have that $|S| \leq |r(S)|$ for every $S \subseteq A$, we know that $|\{a\}| \leq |r(\{a\})|$, so there exists some $b \in B$ such that $r\ a\ b$. We can define our matching as the function $f:A\to B$ such that $f(a) = b$.
    
    \item[Induction hypothesis:] If $r$ is a relation between a finite set $A$ with $|A|\leq k$ and a finite set $B$, then if $|S| \leq |r(S)|$ for every $S \subseteq A$, there exists a matching of $r$ that saturates $A$.
    
    \item[Induction step:] Suppose $|A| = k + 1$ and $|S| \leq |r(S)|$ for every $S \subseteq A$.
    We have two cases: either (1) every proper nonempty subset $S\subsetneq A$ satisfies $|S| < |r(S)|$ or (2) there is some proper nonempty subset $S\subsetneq A$ such that $|S| = |r(S)|$.
        \begin{description}
        \item[Case 1:] Assume for every nonempty subset $S \subsetneq A$ that $|S| < |r(S)|$, and choose arbitrary $a \in A$ and $b \in r(\{a\})$.
        Set $A':=A \setminus \{a\}$ and $B' := B\setminus \{b\}$, and let $r'$ be the restriction of $r$ to $A'$ and $B'$.
        We prove that Hall's condition is satisfied for $r'$.
        Let $T \subseteq A'$. Since $|T| < |r(T)|$, we know that $|T| + 1 \leq |r(T)|$, and removing $b$ from $B$ gives us $|r(T)| - 1 \leq |r'(T)|$, so we now have that $|T| \leq |r'(T)|$. 
        By our induction hypothesis, there exists a matching $M':A'\to B'$, which can be extended to a matching $M:A\to B$ with $M(a)=b$.

        \item[Case 2:] There exists some proper nonempty $S_0 \subsetneq A$ such that $|S_0| = |r(S_0)|$. 
            We first prove that Hall's condition is satisfied for $S_0$. 
            We restrict $r$ to a relation $r'$ between $S_0$ and $r(S_0)$, hence for $T \subseteq S_0$ we have $r(T) = r'(T)$.
            Since for all $T \subseteq S_0$, $|S_0| \leq k$ and $|T| = |r'(T)|$, by our induction hypothesis there is a matching $M_0$ of $r'$ that saturates $S_0$.
            
            Now we consider $A'' = A \setminus S_0$ and $B'' = B \setminus r(S_0)$.
            Let $r''$ be the restriction of $r$ to $A''$ and $B''$.
            Thus, for $T \subseteq A'$,
            \begin{equation*}
                r''(T) = \{y\ |\ r\ x\ y \text{ for some }x\in T\text{ and }y \in B'\}.
            \end{equation*}
            Since $T$ and $S_0$ are disjoint and $r''(T)$ and $r'(S_0)$ are disjoint, we have that $|S_0 \cup T| = |S_0| + |T|$, and $r(S_0 \cup T) = r'(S_0) \cup r''(T)$ so therefore $|r(S_0 \cup T)| = |r'(S_0)| + |r''(T)|$. Since $|S| \leq |r(S)|$ for all $S \subseteq A$, we have that $|S_0| + |T| = |S_0 \cup T| \leq |r(S_0 \cup T)| = |r'(S_0)| + |r''(T)|$, so $|S_0| + |T| \leq |r'(S_0)| + |r''(T)|$. Since $|S_0| = |r'(S_0)|$, we therefore have $|T| \leq |r''(T)|$ for all $T \subset A''$. By our induction hypothesis, this means we have a matching $M_1$ for $r''$ that saturates $A''$.

            Since the domains of $M_0$ and $M_1$ are disjoint, we can define a matching $M$ that saturates $A$ by $M(a)=M_0(a)$ for $a\in S_0$ and $M(a)=M_1(a)$ otherwise.
        \end{description}
    \end{description}
    This completes the proof.
\end{proof}

\section{Formalization}
\label{sec:formalization}


We present formalizations of the statements of each of the three formulations of Hall's Marriage Theorem in \Cref{sec:hall-via-indexed,sec:hall-via-relations,sec:hall-via-graphs}.
For the relation-based presentation in \Cref{sec:formal-via-relations}, we go through the formalized proof in more detail, mirroring the proof in \Cref{sec:hall-proof}.
We use the relation-based formalization to prove the graph-based one using the \mathlib{} simple graph library.

For clarity, we omit all references to decidability, such as the \lean{decidable}, \lean{decidable_eq}, and \lean{decidable_pred} typeclasses.
Like Coq, Lean is based on the calculus of inductive constructions, but it has a noncumulative hierarchy of universes \lean{Prop}, \lean{Type 0}, \lean{Type 1}, and so on, where \lean{Prop} is an impredicative universe of propositions such that (1) proofs of the same propositions are equal and (2) logically equivalent propositions are equal.
While terms of \lean{Type 0} and above imply some kind of computational content, Lean does not promise to be able to compute whether a proposition in \lean{Prop} is \lean{True} or \lean{False}, but instead it provides an extensive facility to decide the truth of certain propositions using the typeclass interface.
This is, in particular, used pervasively in \mathlib{}'s library for finite sets, which are defined to be the quotient type of finite lists up to permutation.

We also omit the tactic proofs and instead opt to describe their contents, since without a proof assistant on hand the descriptions are more illuminating.


\subsection{Via indexed families of sets}
\label{sec:formalize-indexed-families}

In Lean, we can represent an indexed family of finite sets as a function \lean{ι : α → finset β} where \lean{α} is the index set.
A matching then is given by an injective function \lean{f : α → β} such that \lean{∀ (a : α), f a ∈ ι a}.
Hence, consider the following variables to introduce such an indexed family.
\begin{lstlisting}
universes u v
variables {α : Type u} {β : Type v} (ι : α → finset β)
\end{lstlisting}
Lean has some support for universe polymorphism.
The universe variables \lean{u} and \lean{v} stand in for the natural numbers.

We decided to introduce a ``bundled'' type for a matching, which consists of both the function along with the proofs that it is an element of the indexed family of sets.
\begin{lstlisting}
structure matching :=
(f : α → β)
(mem_prod' : ∀ (a : α), f a ∈ ι a)
(injective' : injective f)
\end{lstlisting}
The type of all matchings for an indexed family \lean{ι} is given by \lean{matching ι}.
Lean has a coercion facility to automatically turn terms of types into types or functions, and we implemented function coercion for \lean{m : matching ι} so that \lean{m a} equals \lean{m.f a} for all \lean{a : α}.
Since Lean has proof irrelevance, \lean{m = m'} if and only if \lean{m.f = m'.f}, so a matching ``is'' its function.

With these basic definitions, we can state Hall's Marriage Theorem on a finite index type:
\begin{lstlisting}
theorem hall [fintype α] :
  (∀ (s : finset α), s.card ≤ (s.bind ι).card) ↔ nonempty (matching ι)
\end{lstlisting}
The expression \lstinline{s.bind ι} denotes the finite set $\bigcup_{x\in s}\iota(x)$, where the name \lstinline{bind} derives from the language of monads in functional programming.

We also formalized a version of K\H{o}nig's lemma, and with it we proved the countably infinite version of Hall's Marriage Theorem:
\begin{lstlisting}
theorem infinite_hall (h : ℕ ≃ α) :
  (∀ (s : finset α), s.card ≤ (s.bind ι).card) ↔ nonempty (matching ι)
\end{lstlisting}
We will discuss the details of this in \Cref{sec:konig-lemma-formal}.

\subsection{Via relations}
\label{sec:formal-via-relations}

The relation-based formulation statement of Hall's Marriage Theorem from \Cref{sec:hall-via-relations} and its proof in \Cref{sec:hall-proof} are roughly translated in the following way.

Let $r$ be a relation on finite types $\alpha$ and $\beta$.
If $A$ is a finite set of terms of $\alpha$, we denote the image $r(A)$ of $A$ in $\beta$ in Lean by \lstinline{image_rel} $r\ A$.
\begin{lstlisting}
variables {α β : Type u} [fintype α] [fintype β]
variables (r : α → β → Prop)
def image_rel (A : finset α) : finset β := univ.filter (λ b, ∃ a ∈ A, r a b)
\end{lstlisting}
The requirement that \lean{β} be a finite type makes it convenient in defining \lean{image_rel}.
We could instead have had a hypothesis that \lean$[∀ (a : α), fintype {b : β | r a b}]$ to require that each term \lean{a : α} be related to only finitely many terms of \lean{β}.

 Recall that Hall's Marriage Theorem is that the following two statements are equivalent:
 \begin{itemize}
     \item For every finite set $A$ with terms in $\alpha$, then $|A| \leq\ |r(A)|$.
     \item There exists an injective function $f : \alpha \rightarrow \beta$ such that $r\ x\ (f(x))$ for all $x$ in $\alpha$.
 \end{itemize}
 In Lean, this is represented as:
\begin{lstlisting}
theorem hall :
  (∀ (A : finset α), A.card ≤ (image_rel r A).card)
    ↔ (∃ (f : α → β), function.injective f ∧ ∀ x, r x (f x))
\end{lstlisting}

Our formalized proof of this theorem is broken into separate lemmas, mirroring the proof in \Cref{sec:hall-proof}.
We first consider the converse:
\begin{lstlisting}
theorem hall_easy (f : α → β) (hf₁ : function.injective f) (hf₂ : ∀ x, r x (f x)) 
(A : finset α) : A.card ≤ (image_rel r A).card
\end{lstlisting}
For the proof of \lstinline{hall_easy}, we simply show that the image of $f$ in $\beta$ is a subset of $r(A)$. Since $f$ is injective, the cardinality of its image is equal to that of its preimage, so $|A| = |f(A)| \leq |r(A)|$ and therefore $|A| \leq\ |r(A)|$.

For the forward direction, we use strong induction on the cardinality of $\alpha$.
Strong induction results in the induction hypothesis that
for all relationa $r'$ between finite types $\alpha'$ and $\beta'$ such that $|\alpha'| \leq k$, we have that if $|A| \leq\ |r'(A)|$ for all finite sets $A$ on $\alpha'$, then there exists an injective function $f' : \alpha' \rightarrow \beta'$ such that $r'\ x\ f(x)$ for all $x$ in $\alpha'$.
This is represented as
\begin{lstlisting}
(ih : ∀ {α' β' : Type u} [fintype α'] [fintype β']
        (r' : α' → β' → Prop),
        fintype.card α' ≤ n →
        (∀ (A : finset α'), A.card ≤ (image_rel r' A).card) →
        ∃ (f : α' → β'), function.injective f ∧ ∀ x, r' x (f x))
\end{lstlisting}
In Lean, the type \lean{set α} of sets is defined by \lean{α → Prop}, which are essentially indicator functions on \lean{α}.
Due to proof irrelevance, sets can be easily coerced to types, so it is easy to construct the required restrictions of \lean{r} and apply \lean{ih}.

The first two lemmas are for the two base cases for $|\alpha| = 0$ and $|\alpha| = 1$.

\begin{lstlisting}
theorem hall_hard_inductive_zero (hn : fintype.card α = 0)
  (hr : ∀ (A : finset α), A.card ≤ (image_rel r A).card) :
  ∃ (f : α → β), function.injective f ∧ ∀ x, r x (f x)
\end{lstlisting}
\begin{proof}
    This is straightforward to prove --- if $\alpha$ has cardinality $0$, this means it is empty, so any any finite sets defined on $\alpha$ will be the empty set. Finding an injective function from the empty set to $\beta$ is trivial.
\end{proof}

\begin{lstlisting}
theorem hall_hard_inductive_one (hn : fintype.card α = 1)
  (hr : ∀ (A : finset α), A.card ≤ (image_rel r A).card) :
  ∃ (f : α → β), function.injective f ∧ ∀ x, r x (f x)
\end{lstlisting}
\begin{proof}
    If $\alpha$ has cardinality $1$, the only finite sets we can define on it are the empty set and the singleton set. Letting $a$ be the single element of $\alpha$, we have that $|\{a\}| \leq\ |r(\{a\})|$. This means that there's at least one element in $r(a)$, so we can select any one of them to define our injective function.
\end{proof}

We now consider the induction step with $|\alpha| \leq k + 1$.
This is set up with the following lemma:
\begin{lstlisting}
theorem hall_hard_inductive_step [nontrivial α] {n : ℕ} (hn : fintype.card α ≤ n.succ)
  (hr : ∀ (A : finset α), A.card ≤ (image_rel r A).card)
  (ih : ∀ {α' β' : Type u} [fintype α'] [fintype β'] (r' : α' → β' → Prop),
    fintype.card α' ≤ n →
    (∀ (A' : finset α'), A'.card ≤ (image_rel r' A').card) →
    ∃ (f' : α' → β'), function.injective f' ∧ ∀ x, r' x (f' x)) :
  ∃ (f : α → β), function.injective f ∧ ∀ x, r x (f x)
\end{lstlisting}
\begin{proof}
    We consider two cases, whether or not every finite set $A$ on $\alpha$ that is neither the empty set nor all of $\alpha$ is such that $|A| < |r(A)|$.
    Each of these cases are separately handled by \lean{hall_hard_inductive_step_1} and \lean{hall_hard_inductive_step_2} below.
\end{proof}

In the first case, which is when this condition holds, the following lemma applies:
\begin{lstlisting}
lemma hall_hard_inductive_step_1 [nontrivial α] {n : ℕ}
  (hn : fintype.card α ≤ n.succ)
  (ha : ∀ (A : finset α), A.nonempty → A ≠ univ → A.card < (image_rel r A).card)
  (ih : ∀ {α' β' : Type u} [fintype α'] [fintype β'] (r' : α' → β' → Prop),
    fintype.card α' ≤ n →
    (∀ (A' : finset α'), A'.card ≤ (image_rel r' A').card) →
    ∃ (f' : α' → β'), function.injective f' ∧ ∀ x, r' x (f' x)) :
  ∃ (f : α → β), function.injective f ∧ ∀ x, r x (f x)
\end{lstlisting}
\begin{proof}
    Since we assumed that $A \neq \emptyset$ and $\alpha$ is nontrivial, we have that there are at least two elements $a, a' \in \alpha$ such that $a \neq a'$. Because $\{a\}$ is a proper nonempty subset of $\alpha$, we have that $|\{a\}| <\ |r(\{a\})|$. This means that we can select some $b \in r(A\setminus \{a\})$. Informally, we define $\alpha'$ as $\alpha \setminus \{a\}$, and similarly, we define $\beta'$ as $\beta \setminus \{b\}$. We also define $r'$ as $r$ restricted to $\alpha'$ and $\beta'$. Formally stated, within the proof we have
\begin{lstlisting}
let α' := {a' : α // a' ≠ a},
let β' := {b' : β // b' ≠ b},
let r' : α' → β' → Prop := λ a' b', r a' b',
\end{lstlisting}
    It's clear that for all finite sets $A'$ defined on $\alpha'$, $|A'| \leq\ |r'(A')|$. This is because a finite set $A'$ defined on $\alpha'$ will be equivalent to $A \setminus \{a\}$ for some finite set $A$ defined on $\alpha$. This gives us two cases: either $a \in A$ or $a \notin A$. 

    If $a \in A$, we get that $A' := A \setminus \{a\}$ in $\alpha'$ has cardinality $|A| - 1$. Furthermore, if $a \in A$ then $b \in r(A)$, so then the cardinality of $r'(A')$ is $|r(A)| - 1$. By our assumption, $|A| < |r'(A)|$, so this gives us that $|A'| < |r'(A)|$ in $\alpha'$.

    If $a \notin A$, we get that $A' := A \setminus \{a\}$ in $\alpha'$ has cardinality $|A|$. If $b$ is related to some other element of $A$ by $r$, then $|r'(A')|$ in $\beta'$ will have cardinality $|r(A)| - 1$ and thus $|A'| \leq r'(A')|$. Otherwise, if $b \notin r(A)$, we have that $|r'(A')|$ in $\beta'$ will have cardinality $|r(A)|$, so $|A'| < r'(A')|$.

    Then we can plug $\alpha'$, $\beta'$, and $r'$ into our induction hypothesis to get that there is some injective function $f' : \alpha' \rightarrow \beta'$ that respects $r'$. 

    To get our injective function $f : \alpha \rightarrow \beta$, we simply combine $f'$ with a mapping from $a$ to $b$. We know that it is injective because $\{b\}$ is disjoint from $\beta'$, and we know that it respects relation $r$ because $b \in r(\{a\})$ and $f'$ respects $r'$.
\end{proof}

In case the condition does not hold, there exists some nonempty proper finite set $A$ of $\alpha$ such that $|A| =\ |r(A)|$.
\begin{lstlisting}
lemma hall_hard_inductive_step_2 [nontrivial α] {n : ℕ} 
  (hn : fintype.card α ≤ n.succ)
  (hr : ∀ (A : finset α), A.card ≤ (image_rel r A).card)
  (ha : ∃ (A : finset α),  A.nonempty ∧ A ≠ univ ∧ A.card = (image_rel r A).card) 
  (ih : ∀ {α' β' : Type u} [fintype α'] [fintype β'] (r' : α' → β' → Prop),
    fintype.card α' ≤ n →
    (∀ (A' : finset α'), A'.card ≤ (image_rel r' A').card) →
    ∃ (f' : α' → β'), function.injective f' ∧ ∀ x, r' x (f' x)) : 
  ∃ (f : α → β), function.injective f ∧ ∀ x, r x (f x)
\end{lstlisting}
\begin{proof}
    Let $A$ be a finite set defined on $\alpha$ such that $|A| =\ |r(A)|$. We define two subtypes of $\alpha$, leading us to two injective functions that we then combine into one.

    Informally, let $\alpha'$ be $A$, and let $\beta'$ be $r(A)$. Define $r'$ as the relation $r$ restricted to $\alpha'$ and $\beta'$. Formally, this looks like
\begin{lstlisting}
let α' := {a' : α // a' ∈ A},              -- the type of elements of A
let β' := {b' : β // b' ∈ image_rel r A},  -- the type of elements of r(A)
let r' : α' → β' → Prop := λ a' b', r a' b',
\end{lstlisting}
    Since $|A| = |r(A)|$, we have that for every subset $S \subseteq A$, $|S| \leq |r'(S)|$. This means that for all finite sets $A'$ defined on $\alpha'$, $|A'| \leq |r'(A')|$, and we can apply the induction hypothesis to get injective function $f' : \alpha' \rightarrow \beta'$ that respects relation $r'$.

    Now, informally, let $\alpha''$ be $\alpha$ with $A$ removed, and let $\beta''$ be $\beta$ with $r(A)$ removed. Define $r''$ as the relation $r$ restricted to $\alpha''$ and $\beta''$. Formally, we have
\begin{lstlisting}
let α'' := {a'' : α // a'' ∉ A},             -- the type of elements not in A
let β'' := {b'' : β // b'' ∉ image_rel r A}, -- the type of elements not in r(A)
let r'' : α'' → β'' → Prop := λ a'' b'', r a'' b'',
\end{lstlisting}
    If $A''$ is a finite set defined on $\alpha''$, we have that $A''$ and $A$ are disjoint in $\alpha$, so $r(A \cup A'') = r'(A) \cup r''(A'')$, and $|r(A \cup A'')| = |r'(A)| + |r''(A'')|$. Since we assumed that $|A \cup A''| \leq |r(A \cup A'')|$, and $|A| = |r'(A)|$, we get that $|A| + |A''| = |A \cup A''| \leq |r(A \cup A'')| = |r'(A)| + |r''(A'')|$, or $|A''| \leq |r''(A'')|$. This means that Hall's condition is satisfied for $\alpha''$, and we can apply our induction hypothesis to get injective function $f'' : \alpha'' \rightarrow \beta''$ that respects relation $r''$.

    Since $\alpha'$ and $\alpha''$ are disjoint, as are $\beta'$ and $\beta''$, we have that the images of $f'$ and $f''$ are also disjoint. So, to get our injective function $f : \alpha \rightarrow \beta$, we combine $f'$ and $f''$.
\end{proof}

These two cases are combined in \lean{hall_hard_inductive_step}, completing the proof of \lean{hall}.

\subsection{Via graph theory}

We give an overview of the simple graph library of \mathlib{} that implements the graph theory definitions from \Cref{sec:hall-via-graphs}, and then in the following section we use our formalization of the theorem via relations to prove the graph theoretical version.

\subsubsection{Graph theory library}

The simple graph library of \mathlib{} (in \lean{combinatorics.simple_graph}) was initially created by Aaron Anderson, Jalex Stark, and the third author in Summer of 2020, and its first application was to support the formalization of the Friendship Theorem to add to Lean's standing in Freek Wiedijk's 100 theorems list\cite{FreekList}.
There is a larger ambition to design a cohesive library that includes directed graphs, multigraphs, and combinatorial maps, too, but it was thought that simple graphs were a good testbed for the design of a combinatorics library, since they already present interesting design issues that algebraic objects tend not to have.
In \cite{Chou1994}, Chou, who gave a formalization of multigraphs in HOL, mentioned the difficulty in formalizing such concrete objects --- de Bruijn had pointed out that the more abstract a piece of mathematics, the more gaps that mathematicians had already filled, making the process of formalization easier.

The type of simple graphs is based on the one in \cite{Doczkal2019} for Coq:
\begin{lstlisting}
structure simple_graph (V : Type u) :=
(adj : V → V → Prop)
(sym : symmetric adj)
(loopless : irreflexive adj)
\end{lstlisting}
Given a type \lstinline{V}, then \lstinline{G : simple_graph V} is a simple graph defined over \lstinline{V} with an adjacency relation \lstinline{G.adj} between vertices of type \lstinline{V} that is symmetric (\lstinline{G.sym}) and irreflexive (\lstinline{G.loopless}).
One does not generally speak of a type of \emph{all} graphs, but if necessary the sigma type \lean{Σ (V : Type u), simple_graph V} gives all the simple graphs in universe \lean{Type u}.
A reason for this is that if the vertex type were included in the structure, then equalities of simple graphs would involve equalities of their vertex types, and \emph{disproving} equalities of types tends to be impossible --- there are models of Lean's formal system where types of the same cardinality are equal --- and applications where vertices of different graphs would need to be compared tend to be better served by, for example, situating the graphs as subgraphs in some larger graph.

As an example of new design challenges for graphs that have not already been solved in \mathlib{}, consider the following.
A difference between simple graphs and other algebraic objects such as groups in common mathematical practice is that, in the statement ``$G$ is a group'', $G$ refers to the set of elements of the group, with the group operation and proofs of group axioms being left tacit.
In contrast, in ``$G$ is a simple graph,'' $G$ refers to the simple graph itself.
This is all to say that algebraic objects experience \emph{synecdoche} --- a part refers to the whole.
Algebraic objects in \mathlib{} are similarly referred to by their carrier types, with the synecdoche being implemented through typeclasses.
For objects that are referred to by a component type or function,
Lean's automatic coercion system helps make subobjects ``be'' objects themselves.
For example, subgroups of a group \lean{G} are a structure consisting of a carrier set \lean{G' : set G} and proofs of closure, and there is a coercion from subgroups to types defined by coercing \lean{G'} to be a type.
There is a typeclass instance giving this type the structure of a group, and in this sense a subgroup ``is'' a group.
These systems do not apply to simple graphs, which are neither types nor functions, and Lean's other automatic coercion system is currently unable to let subgraphs ``be'' simple graphs without manual inclusion of universe levels that it is unable to infer itself.

Simple graphs provide the following functions to give the neighbor set of a single vertex, the neighbor set of a set of vertices, and the set of edges.
We expand some of the definitions here from how they appear in \mathlib{ }to reduce the number of dependencies in this overview.
\begin{lstlisting}
/-- The set of all `w` adjacent to a given `v`. -/
def neighbor_set (v : V) : set V := {w : V | G.adj v w}

/-- The set of all `w` adjacent to an element of `S`. -/
def neighbor_set_image (S : set V) : set V :=
{w : V | ∃ v, v ∈ S ∧ w ∈ G.neighbor_set v}

/-- The set of all unordered pairs `⟦(v, w)⟧` such that `G.adj v w` -/
def edge_set : set (sym2 V) := sym2.from_rel G.sym
\end{lstlisting}
The type \lean{sym2 V} is the \emph{symmetric square} of \lean{V}, which consists of all unordered pairs, represented as $V\times V$ modulo permutations.
The \lean{edge_set} is the set of all unordered pairs $[(v,w)]$ such that $v$ is adjacent to $w$.
Formally, if $q:V \times V \to \operatorname{Sym}^2V$ is the quotient map, then \lean{sym2.from_rel h} is defined to be the dashed arrow in the following commutative diagram when \lean{h} is a proof that $r:V\to V\to\text{Prop}$ is a symmetric relation.
\begin{equation*}
    \begin{tikzcd}
        V \times V \arrow[rr,"q"] \drar[swap]{\operatorname{uncurry} r} & & \operatorname{Sym}^2 V \dlar[dashed]{} \\
        & \text{Prop}
    \end{tikzcd}
\end{equation*}
The type $\operatorname{set}(\operatorname{Sym}^2V)$ is defined to be $\operatorname{Sym}^2 V \to \text{Prop}$.

While it is not necessary for this paper, we will note that Lean's typeclass system makes it convenient to work with degrees of infinite graphs:
\begin{lstlisting}
def degree (v : V) [fintype (G.neighbor_set v)] : ℕ :=
(G.neighbor_set v).to_finset.card
\end{lstlisting}
The square brackets indicate that Lean will try to fill in the second argument automatically with a proof of finiteness of the neighbor set of \lean{v} in the expression \lean{G.degree v}.
If \lean{V} is a finite type, then there is an instance that is able to automatically supply such a proof.

For graph colorings, we use a ``bundled'' type, combining the coloring function $V\to C$ with a proof that it is a proper coloring.
\begin{lstlisting}
/-- `G.coloring C` is the type of `C`-colorings of `G`. -/
structure coloring (G : simple_graph V) (C : Type v) :=
(color : V → C)
-- Adjacent vertices have distinct colors:
(valid : ∀ ⦃v w : V⦄, G.adj v w → color v ≠ color w)

/-- The set of vertices in the color class for `c`. -/
def coloring.color_set (c : C) : set V := f.color ⁻¹' {c}

/-- A bipartition `f : G.bipartition` is a coloring of `G` by
    the two-term type `fin 2`.  The color classes `f.color_set 0`
    and `f.color_set 1` give the partition of `V`. -/
def bipartition (G : simple_graph V) := G.coloring (fin 2)
\end{lstlisting}

We also define a ``bundled'' type for subsets of edges that form a matching of a given graph.
\begin{lstlisting}
structure matching (G : simple_graph V) :=
(edges : set (sym2 V))
(sub_edges : edges ⊆ G.edge_set)
-- If two edges are in the matching, and if v is a vertex incident to both,
-- then the edges are the same:
(disjoint : ∀ (x y ∈ edges) (v : V), v ∈ x → v ∈ y → x = y)

def matching.saturates (M : G.matching) (S : set V) : Prop :=
S ⊆ {v : V | ∃ x, x ∈ M.edges ∧ v ∈ x}
\end{lstlisting}
If \lstinline{M : G.matching}, then \lstinline{M} is made up of an edge set \lstinline{M.edges} that is a subset of the edge set of \lstinline{G} (\lstinline{M.sub_edges}) where no two edges share an incident vertex (\lstinline{M.disjoint}). If \lstinline{M.saturates S} where \lstinline{S} is a set of vertices from \lstinline{V}, then for all \lstinline{v ∈ S}, there exists some edge \lstinline{x ∈ M.edges} incident to \lstinline{v}.

\subsubsection{Formalized theorem}
\label{sec:hall-theorem-formal}

In the context of the \mathlib{} graph theory library, the statement of \Cref{thm:hall-graph} is as follows, simplified to the case of a finite bipartite graph:
\begin{lstlisting}
variables (G : simple_graph V) [fintype V] (b : G.bipartition)

theorem hall_marriage_theorem :
  (∀ (S ⊆ (b.color_set 0)),
     fintype.card S ≤ fintype.card (G.neighbor_set_image S))
  ↔ (∃ (M : G.matching), M.saturates (b.color_set 0))
\end{lstlisting}
Modulo technical details of supplying finiteness proofs to \lstinline{fintype.card}, the full statement of \Cref{thm:hall-graph} is given by
\begin{lstlisting}
variables (G : simple_graph V) (b : G.bipartition)
variables [fintype (b.color_set 0)] 
variables [∀ v, v ∈ b.color_set 0 → fintype (G.neighbor_set v)]

theorem hall_marriage_theorem' :
  (∀ (S ⊆ (b.color_set 0)),
     fintype.card S ≤ fintype.card (G.neighbor_set_image S))
  ↔ (∃ (M : G.matching), M.saturates (b.color_set 0))
\end{lstlisting}

We will now describe how to prove \lean{hall_marriage_theorem} from \lean{hall} in \Cref{sec:formal-via-relations}.
Recall its statement:
\begin{lstlisting}
variables {α β : Type u} [fintype α] [fintype β]
variables (r : α → β → Prop)

theorem hall :
  (∀ (A : finset α), A.card ≤ (image_rel r A).card)
    ↔ (∃ (f : α → β), function.injective f ∧ ∀ x, r x (f x))
\end{lstlisting}
We specialize \lstinline{hall} to match up with the definitions we have. In our case, we have that \lstinline{α} and \lstinline{β} are \lstinline{b.color_set 0} and \lstinline{b.color_set 1}, respectively. Our relation \lstinline{r} is simply \lstinline{G.adj} when restricted to \lstinline{b.color_set 0} and \lstinline{b.color_set 1}. A matching \lean{f} may be regarded as a matching in the graph that saturates \lstinline{b.color_set 0}, and conversely a matching that saturates \lstinline{color_set 0} defines a matching of \lean{r} by \lstinline{matching.disjoint} and \lstinline{matching.sub_edges}.
We also have that \lstinline{image_rel r A} is equivalent to \lstinline{G.neighbor_set A}.

\subsection{K\H{o}nig's lemma}
\label{sec:konig-lemma-formal}

The usual K\H{o}nig's lemma is that every countably infinite locally finite simple graph $G$ has an infinite ray, which is a sequence of infinitely many distinct vertices $v_1,v_2,\dots$ such that $v_i$ is adjacent to $v_{i+1}$ for all $i$.

For our purposes, we use a different formulation.
Consider an $\mathbb{N}$-indexed inverse system of sets 
\begin{equation*}
    \begin{tikzcd}
    X_0 & X_1 \lar[swap]{f_0} & X_2 \lar[swap]{f_1} & X_3 \lar[swap]{f_2} & \lar[swap]{f_3} \cdots
    \end{tikzcd}.
\end{equation*}
The \emph{inverse limit} (or \emph{projective limit}) $\varprojlim_{i}X_i$ is the limit of the above diagram.
Elements of the inverse limit are elements $x\in \prod_{i}X_i$ such that $x_i=f_i(x_{i+1})$ for all $i\in\mathbb{N}$.
For us, K\H{o}nig's lemma is the statement that if each $X_i$ is a nonempty finite set then the inverse limit is nonempty:
\begin{lemma}[K\H{o}nig's lemma]
    Suppose $\{X_i\}_{i\in\mathbb{N}}$ is an indexed family of sets with functions $f_i:X_{i+1}\to X_i$ for each $i$.
    If each $X_i$ is a nonempty finite set, then there exists a family of elements $x\in\prod_iX_i$ such that $x_i=f_i(x_{i+1})$ for all $i$.
\end{lemma}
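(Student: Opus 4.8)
The plan is to construct the desired family by isolating, at each level, those elements that admit preimages arbitrarily far up the inverse system, and then threading a coherent sequence through these ``stably reachable'' elements. For $i \le j$ write $g_{i,j} \colon X_j \to X_i$ for the composite $f_i \circ f_{i+1} \circ \cdots \circ f_{j-1}$ (with $g_{i,i} = \mathrm{id}$), so that $g_{i,j+1} = g_{i,j} \circ f_j$ and $g_{i,j} = f_i \circ g_{i+1,j}$. I would then define the set of stably reachable elements at level $i$ by
\[
    Y_i := \bigcap_{j \ge i} g_{i,j}(X_j) \subseteq X_i.
\]

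The first step is to show each $Y_i$ is nonempty. Since $g_{i,j+1}(X_{j+1}) = g_{i,j}(f_j(X_{j+1})) \subseteq g_{i,j}(X_j)$, the sets $g_{i,j}(X_j)$ form a descending chain of subsets of the finite set $X_i$, and each is nonempty because $X_j$ is nonempty. A descending chain of nonempty subsets of a finite set stabilizes (its cardinalities are a non-increasing sequence of natural numbers), so the intersection equals the eventual value and is nonempty; hence $Y_i \ne \emptyset$.

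The key step is the \emph{lifting property}: if $a \in Y_i$, then there exists $b \in Y_{i+1}$ with $f_i(b) = a$. To prove it, observe that for every $j > i$ there is a witness $b_j \in X_j$ with $g_{i,j}(b_j) = a$; since $g_{i,j} = f_i \circ g_{i+1,j}$, the element $g_{i+1,j}(b_j)$ lies in $f_i^{-1}(a) \cap g_{i+1,j}(X_j)$, so this set is nonempty. As $j$ grows these sets again form a descending chain of nonempty subsets of the finite set $X_{i+1}$, so their intersection is nonempty; any element $b$ of it satisfies $f_i(b) = a$ and lies in every $g_{i+1,j}(X_j)$, which is exactly the statement $b \in Y_{i+1}$. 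With nonemptiness and lifting in hand, I would build the family by recursion: choose $x_0 \in Y_0$, and given $x_i \in Y_i$ apply the lifting property to select $x_{i+1} \in Y_{i+1}$ with $f_i(x_{i+1}) = x_i$. The resulting $x \in \prod_i X_i$ then satisfies $x_i = f_i(x_{i+1})$ for all $i$ by construction.

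The main obstacle is the lifting property, which is where finiteness is genuinely used via the nested-finite-sets argument and which encapsulates the compactness phenomenon underlying K\H{o}nig's lemma. In a formalized setting, the recursive construction of the whole sequence from the one-step lift would also require some care, combining dependent recursion with a choice principle to select each $x_{i+1}$ from the nonempty set of valid lifts.
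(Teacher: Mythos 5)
Your proof is correct and takes essentially the same approach as the paper's: your sets $Y_i = \bigcap_{j \ge i} g_{i,j}(X_j)$ are precisely the paper's \emph{very extendable} elements, your lifting property is the paper's observation that the restricted inverse system has surjective maps, and both arguments conclude by iterated lifting starting from level $0$. The only cosmetic difference is how nonemptiness of the restricted system is established --- you stabilize a descending chain of nonempty subsets of a finite set, while the paper argues by contradiction from a maximal failure index --- but this is the same underlying finiteness idea.
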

The usual K\H{o}nig's lemma follows from this one by selecting a vertex $v_0$ then letting $X_i$ consist of all paths from $v_0$ of length at most $i$, with $f_i$ being path truncation --- the set of infinite rays corresponds to the inverse limit of this inverse system.

\begin{remark}
    Inverse limits are not always nonempty, even if every $X_i$ is nonempty.
    For example, consider the inverse system given by $X_i=\{j\in\mathbb{N}\mid j\geq i\}$ with inclusion maps $f_i:X_{i+1}\hookrightarrow X_{i}$ for each $i$.
\end{remark}

A similar version of K\H{o}nig's lemma, in terms of infinite rays in trees, was formalized in the Mizar proof assistant in \cite{Bancerek1991}.
We have formalized a proof of K\H{o}nig's lemma in Lean, but we still need to adapt it to fit into the pre-existing \mathlib{} category theory library.
The Lean code for the entire proof is about 200 lines using only basic definitions of finite sets.
The first ingredient is inverse systems.
We only consider indexed families of finite sets, and since it is possible to create a universal type that these are subsets of, we define inverse systems like so:
\begin{lstlisting}
structure inv_system (α : Type u) :=
(ι : ℕ → finset α)
(f : α → α)
(fprop : ∀ (n : ℕ) (x : α), x ∈ ι n.succ → f x ∈ ι n)
\end{lstlisting}
For simplicity of dependent types, the function \lstinline{f} is given as an endomorphism of this universal type rather than as
\begin{lstlisting}
structure inv_system' :=
(ι : ℕ → Type v)
(f' : Π (n : ℕ), ι n.succ → ι n)
\end{lstlisting}
whose direct use has difficulties especially when trying to rewrite types of terms such as \lean{x : ι ((n + 1) + k)} to \lean{x : ι (n + (k + 1))}, which are not definitionally equal.

Then, the type of all limits of an inverse system is given by
\begin{lstlisting}
structure inv_system.limit {α : Type u} (ι : inv_system α) :=
(s : ℕ → α)
(s_mem : ∀ (n : ℕ), s n ∈ ι n)
(is_lim : ∀ (n : ℕ), ι.f (s n.succ) = s n)
\end{lstlisting}
The statement of our version of K\H{o}nig's lemma is thus
\begin{lstlisting}
lemma limit_nonempty_if_finite_and_nonempty
  {α : Type u} (ι : inv_system α)
  (hne : ∀ n, (ι n).nonempty) :
  nonempty ι.limit
\end{lstlisting}
As the lemma is nonconstructive, the lemma produces a proof of \lstinline{nonempty ι.limit} rather than a term of \lstinline{ι.limit}.

The key to proving K\H{o}nig's lemma is to restrict to \emph{very extendable} elements of the inverse system, which are elements $x\in X_n$ such that for every $k$ there is a $y\in X_{n+k}$ such that $x=f_nf_{n+1}\dots f_{n+k-1}y$.
The restriction is also an inverse system, and since in the restriction every map is surjective, elements of the inverse limit can be produced by repeatedly choosing lifts of a very extendable element of $X_0$.
The core of the proof is to use the fact that $X_i$ is finite and nonempty to show that the restricted inverse system is also finite and nonempty.
If there were an $X_n$ with no very extendable elements, then one could take the maximum $k$ over the elements of $X_n$ such that a $y$ in the definition of very extendable does not exist.  Since $X_{n+k}$ is nonempty, the iterated image of an element of $X_{n+k}$ in $X_n$ gives an element contradicting the maximality of $k$.

Recall the statement of the infinite version of the Hall Marriage Theorem from \Cref{thm:infinite-hall-indexed-family}, specialized to a countably infinite index set.
Formalized in Lean, this is
\begin{lstlisting}
theorem infinite_hall {α : Type u} {β : Type v} (ι : α → finset β) (h : ℕ ≃ α) :
  (∀ (s : finset α), s.card ≤ (s.bind ι).card) ↔ nonempty (matching ι)
\end{lstlisting}
The type \lean{ℕ ≃ α} of equivalences consists of inverse pairs of functions between \lean{ℕ} and \lean{α}.  These coerce to functions \lean{ℕ → α}, and the inverse equivalence is given by \lean{h.symm : α ≃ ℕ}.

The way in which K\H{o}nig's lemma can be used to prove this is to let $M_n$ denote the set of all matchings on the first $n$ indices of $\alpha$, with $f_n:M_{n+1}\to M_n$ being the restriction of a matching to a smaller index set.
The Hall marriage condition applies to the restricted indexed families, hence proves each $M_n$ is nonempty, and so the conditions for K\H{o}nig's lemma are satisfied.
An element of the inverse limit gives a way to produce a matching on the entire index set: for an index within the first $n$ indices, consider the inverse limit's matching in $M_{n}$ and use this matching to give the value associated to the element.  This is independent of the choice of $n$, so well-defined and injective.

Since we did not use the proper \mathlib{} interface, which would be akin to \lean{inv_system'}, we ran into some dependent type difficulties in the proof, but all were eventually solvable by constructing technical lemmas for difficult rewrites.
The inverse system was defined over \lean{Σ (n : ℕ), matching (fin_restrict ι h n)}, where \lean{fin_restrict ι h n} restricts an indexed family of sets to the first \lean{n} indices, where \lean{(ι k).fst = k} and \lean{(ι k).snd} is a matching on the first \lean{k} indices, represented as terms of \lean{fin k}.
The main difficulty was in manipulating expressions of the form
\begin{lstlisting}
(L.s (h.symm a).succ).snd ⟨h.symm a, _⟩
\end{lstlisting}
where \lean{L} is a limit of the inverse system of matchings.

\subsection{An application}
\label{sec:application}

Say a graph $G$ with vertex set $V$ \emph{carries} a function $f: V\to V$ if (1) $v$ is adjacent to $f(v)$ for all $v\in V$ and (2) $f(f(v))\neq v$ for all $v$ (that is, if the edges $[(v, f(v))]$ and $[(w, f(w)]$ are equal, then $v=w$).
We give the condition under which a finite graph carries a function and give a Lean formalization of the statement.

\begin{theorem}
    \label{thm:carried-fun-iff}
    A graph $G$ carries a function if and only if for every subset $U\subseteq V$ that the total number of edges incident to a vertex in $U$ is at least $\lvert U\rvert$.
\end{theorem}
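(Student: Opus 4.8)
The plan is to deduce \Cref{thm:carried-fun-iff} from \Cref{thm:hall-relation} by applying Hall's theorem to the vertex--edge incidence relation. Define a relation $r$ between $V$ and $E(G)$ by declaring $r\ v\ e$ to hold exactly when $v$ is incident to the edge $e$. For a subset $U\subseteq V$, the image $r(U)$ is then precisely the set of edges incident to at least one vertex of $U$, so the Hall condition $\lvert U\rvert\le\lvert r(U)\rvert$ is, word for word, the hypothesis appearing in the theorem. Since $G$ is finite, both $V$ and $E(G)\subseteq\operatorname{Sym}^2 V$ are finite, so \Cref{thm:hall-relation} applies and states that $r$ admits a matching saturating $V$ if and only if $\lvert U\rvert\le\lvert r(U)\rvert$ for every $U\subseteq V$. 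The only remaining work is to match up the matchings of $r$ that saturate $V$ with the functions carried by $G$.

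For this identification I would exhibit mutually inverse constructions. Given a function $f$ carried by $G$, set $g(v)=[(v,f(v))]$; condition~(1) makes each $g(v)$ a genuine edge incident to $v$, and the reformulation of condition~(2) is exactly the injectivity of $v\mapsto[(v,f(v))]$, so $g$ is an injective, incidence-respecting map $V\to E(G)$, i.e.\ a matching saturating $V$. Conversely, given such a matching $g$, each $g(v)$ is an edge incident to $v$; because $G$ is loopless this edge has a unique endpoint distinct from $v$, which I define to be $f(v)$. Then $v$ is adjacent to $f(v)$, giving condition~(1), and $g(v)=[(v,f(v))]$, so injectivity of $g$ yields condition~(2). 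Chaining the three equivalences---carried function, saturating matching, Hall condition---completes the proof.

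Along the way I would record that, in a loopless graph, the two phrasings of condition~(2) agree: if $f(f(v))=v$ for some $v$, then $[(f(v),f(f(v)))]=[(f(v),v)]=[(v,f(v))]$, so the values of the edge map at $v$ and at $f(v)$ coincide, whence injectivity forces $v=f(v)$, contradicting looplessness; the converse direction is the computation already indicated in the definition.

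The step I expect to be the main obstacle is recovering $f$ from a saturating matching $g$, namely selecting ``the other endpoint'' of the edge $g(v)$. This depends on the edge being incident to $v$ and on its partner vertex being uniquely determined and distinct from $v$, which is exactly where irreflexivity of the adjacency relation is used. In the formalization this is the delicate point, since an edge lives in $\operatorname{Sym}^2 V$ and extracting the member of the unordered pair $[(v,w)]$ other than a given $v$ is a genuine, if small, construction; once this partner operation and its defining properties are in place, the remaining verifications are routine bookkeeping.
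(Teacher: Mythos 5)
Your proposal is correct and takes essentially the same approach as the paper: both identify a function carried by $G$ with an injective choice of an incident edge for each vertex (the edge recording the next image of $f$) and then invoke Hall's theorem for this incidence structure. The only differences are cosmetic --- you route the reduction through the relation formulation (\Cref{thm:hall-relation}), which confines you to finite $G$, while the paper phrases the same reduction via the indexed family $\{E_v\}_{v\in V}$ and cites \Cref{thm:infinite-hall-indexed-family}; you also spell out the edge-partner construction and the equivalence of the two phrasings of condition (2), details the paper's sketch leaves implicit.
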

\begin{proof}
    For each $v\in V$, let $E_v$ be the set of edges incident to $v$.
    A function $f$ carried by $G$ can be thought of as being a choice of edge in $E_v$ for each $v$ such that different vertices choose different edges --- the chosen edge records the next image of $f$.
    Hence, the theorem is equivalent to \Cref{thm:infinite-hall-indexed-family}.
\end{proof}

We now give a formalization of this statement, specialized to finite graphs, along with the key lemma in its proof using the formalization from \Cref{sec:formalize-indexed-families}.
Consider the following variables.
\begin{lstlisting}
variables {V : Type u} [fintype V]
\end{lstlisting}
The statement of \Cref{thm:carried-fun-iff} is then
\begin{lstlisting}
theorem exists_carried_fun_iff {V : Type u} [fintype V]
  (G : simple_graph V) :
  (∃ (f : V → V), (∀ v, G.adj v (f v))
                  ∧ function.injective (λ v, ⟦(v, f v)⟧))
  ↔ (∀ (U : finset V), U.card ≤ (U.bind (λ u, G.incidence_finset u)).card)
\end{lstlisting}
The core of the proof is
\begin{lstlisting}
lemma exists_carried_iff_matching (G : simple_graph V) :
  (∃ (f : V → V), (∀ v, G.adj v (f v))
                  ∧ function.injective (λ v, ⟦(v, f v)⟧))
  ↔ nonempty (matching (λ u, G.incidence_finset u))
\end{lstlisting}
and it is a standard application of lemmas in the simple graph library.
Rewriting the goal using this lemma results in a goal that is immediately proved by \lean{hall}.

\begin{remark}
    There is also a version of this theorem for directed graphs, which have not yet been formalized in \mathlib{}.
    We say a directed graph $G$ \emph{carries a function} $f:V\to V$ if $(v,f(v)$ is a directed edge for every $v\in V$.
    A finite directed graph carries a function if and only if for every subset $U\subseteq V$ that the total number of directed edges $(u,v)$ in $G$ with $u\in U$ is at least $\lvert U\rvert$.
\end{remark}

\section{Future Work}
\label{sec:future}

Since Hall's Marriage Theorem is usually presented along with graph theory in an undergraduate mathematics course (indeed, it appears shortly after the degree-sum formula in \cite{Rosen2011}), we had naively assumed this formalization project would help develop more of the \mathlib{} graph theory library --- in that vein, a way in which the \mathlib{} graph theory community is evaluating progress is to periodically check how many pages of \cite{Bollobas1998} have been formalized.

For design purposes, it is useful to have specific targets in mind, and so we mention a few projects that we think would be worth pursuing next.

\begin{itemize}
\item A matching $M$ of a graph $G$ is \emph{perfect} if $M$ saturates the vertex set $V$.
    Tutte's theorem is that a graph has a perfect matching if and only if for every $U\subseteq V$ then the induced graph $G[V-U]$ has at most $\lvert U\rvert$ connected components with an odd number of vertices.
    
    While there is code for induced subgraphs and connected components, it has not yet been contributed to \mathlib{} proper.
    This theorem would help refine the design of these to make sure they interoperate well.
    
\item A \emph{weighted} graph is a graph along with an assignment of numbers, called \emph{weights}, to the edges.  The weight of a matching is the sum of the weights of its edges.  The \emph{assignment problem} is a combinatorial optimization problem that involves finding maximal (or minimal) matchings with respect to weight.  The Hungarian Algorithm solves computes maximal perfect matchings.  It would be interesting to formalize the algorithm, to prove it runs in polynomial time, and to show it can find maximal perfect matchings.  This could also be adapted to show it can find maximal weight transversals for matrices.

\item For the category theory part of \mathlib{}, it would be worth formalizing the full infinite cardinality version of Hall's theorem (\Cref{thm:infinite-hall-indexed-family}).  Inverse systems already exist, and this would involve adding the theorem that inverse systems of nonempty finite types have a nonempty inverse limit.  In more generality, we could prove that inverse systems of compact spaces in the category of topological spaces have nonempty inverse limits.

\end{itemize}

\section*{Acknowledgments}

We would like to thank Kevin Buzzard for his suggestion to work on this project and for his constant encouragement. We also would like to thank Jalex Stark for helpful comments on a draft, as well as for contributing code to the \mathlib{} repository associated to this paper.

\section*{Declaration of interest}

No potential conflict of interest was reported by the author(s). 

\section*{Funding}

The second author was supported by the Cantab Capital Institute for the Mathematics of Information.
The third author was supported by the Simons Foundation.

\let\MRhref\undefined
\bibliographystyle{hamsalpha}
\bibliography{sources.bib}

\end{document}